\documentclass[11pt,a4paper]{article}
\usepackage[english]{babel}

\usepackage{fullpage}

\usepackage{amsmath,amsfonts,amsthm}
\usepackage[colorlinks,linkcolor=blue]{hyperref}
\usepackage[abbrev,nobysame]{amsrefs}
\usepackage{tikz}
\usetikzlibrary{decorations.markings,intersections, calc}

\newtheorem{theorem}{Theorem}

\newtheorem{lemma}{Lemma}[section]
\newtheorem{cor}[lemma]{Corollary}

\theoremstyle{definition}
\newtheorem{definition}[lemma]{Definition}

\theoremstyle{remark}
\newtheorem{remark}[lemma]{Remark}

\newcommand{\R}{\mathbb R}
\newcommand{\ep}{\varepsilon}
\newcommand{\ga}{\gamma}
\newcommand{\pd}{\partial}
\newcommand{\co}{\colon}
\newcommand{\LL}{\mathcal L}
\newcommand{\ov}{\overline}

\newcommand{\Norm}{{\|{\cdot}\|}}

\newcommand\be{\begin{equation}}
\newcommand\ee{\end{equation}}
\numberwithin{equation}{section}

\begin{document}

\title{Local Finiteness of the Number of Reflections \\
in Semi-Dispersing Minkowski Billiards}

\author{R. V. Barinov$^{1,2}$, S. V. Ivanov$^{1,2}$}

\date{}

\maketitle

\footnotetext[1]{St.~Petersburg Department
of V.~A.~Steklov Mathematical Institute
of the Russian Academy of Sciences,
27 Fontanka, St. Petersburg, Russia, 191023.}

\footnotetext[2]{
St.~Petersburg State University,
7--9 Universitetskaya Emb., St.~Petersburg, Russia, 199034.}

\renewcommand{\thefootnote}{}
\footnotetext{This work is supported by the Russian Science Foundation
under project No~25-11-00058.}

\begin{abstract}
We prove that in a semi-dispersing Minkowski billiard, that is a billiard in the complement of a collection of convex sets in a normed space with a smooth strictly convex norm, any billiard trajectory of finite length has only finitely many reflections off the walls.
\end{abstract}

\section{Introduction}
In the 1970s, Ya.~G.\ Sinai formulated the following problem,
motivated by the study of ideal gas models:
in a system of $n$ hard balls in the Euclidean space $\R^d$
where the balls move uniformly and collide elastically,
is the number of collisions bounded?
The set of admissible positions of the balls in this problem
is naturally identified with the space $\R^{dn}$
from which one removes $\frac{n(n-1)}{2}$ cylindrical sets
corresponding to configurations where some pairs of balls overlap.
Upon an appropriate linear transformation,
the evolution of the system of balls can be interpreted
as a billiard trajectory in this configuration space
with the standard reflection rule
``angle of incidence equals angle of reflection'', see, e.g., \cite{Gal81}.

One can consider a more general class of billiards
where the ``billiard table'' is obtained by removing
a finite collection of convex sets with smooth boundaries
from~$\R^n$.
Such billiards are called \textit{semi-dispersing}.
In general, in a semi-dispersing billiard a trajectory may have
infinitely many reflections.
However it makes sense to ask about their local finiteness,
that is, finiteness within any bounded time interval.
Under appropriate non-degeneracy conditions ruling out ``zero angles''
between the walls of the billiard domain,
one may also consider the problem of a uniform bound on the local number of reflections over all trajectories, see \cite{Sinai78}.
Note that for billiards in convex domains, on the contrary, it is easy to construct examples where some trajectories reflect infinitely many times from the boundary within a bounded time interval, see, for example, \cite{Halpern77}.

The problem of local finiteness of reflections in semi-dispersing billiards
in $\R^n$ was solved affirmatively
by L. N. Vaserstein \cite{Vas79} and G. A. Galperin \cite{Gal81}.
In these works, the global finiteness of the number of collisions
along each trajectory in Sinai's hard balls problem was also proved.
However the question of a uniform bound over all trajectories
remained open for a long time.
Finally Burago, Ferleger, and Kononenko~\cite{BFK98},
using the geometry of Alexandrov spaces of non-positive curvature,
proved the uniform boundedness of both local and global number
of reflections in a semi-dispersing billiard with non-degenerate
intersections of walls.
As a corollary, the finiteness of the total number of collisions
in the hard balls problem was established.

The goal of this paper is to generalize the above mentioned results
on local finiteness of reflections to Minkowski billiards.
The difference from classical billiards is
that the ``billiard table'' is located not in $\R^n$
but in an arbitrary finite-dimensional normed space
with a smooth strictly convex norm.
In a normed space the standard billiard rule
``angle of incidence equals angle of reflection''
does not make sense.
It is replaced by a variational formulation:
a billiard trajectory realizes a local extremum of the arc-length
functional with free points on the boundary of the billiard domain.
Formal definitions are given in \S\ref{sec:prelim}, see also \cite{GT02}.

We also note that, in the case of a convex billiard domain,
there is a correspondence between Euclidean billiards and
projective Minkowski billiards,
in the sense that if a Minkowski billiard in a convex domain is projective
then it is a usual Euclidean billiard under a suitable Euclidean structure,
see~\cite{GluMat25}.
General information about projective billiards can be found in~\cite{ProjTab}.

Like in the Euclidean case, a Minkowski billiard
is called semi-dispersing if the complement
of the billiard domain can be represented as a union
of a finite collection of convex sets (``walls'') with smooth boundaries.
We do not rule out trajectories that ``hit a corner''
but regard them defined only up to the moment of such a hit,
see definitions in \S\ref{sec:prelim}.
The main result of this paper is the following.

\begin{theorem}\label{t:time}
In a semi-dispersing Minkowski billiard, the set of reflection moments
along any billiard trajectory of finite length is finite.
\end{theorem}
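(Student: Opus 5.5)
Argue by contradiction. Suppose a trajectory $\ga$ of finite length $L$ has infinitely many reflection moments. Parametrize it by arc length. Since the reflection set in $[0,L]$ is infinite, it has an accumulation point; take the first one, $t_*$. This is the first time at which the reflections accumulate, so we may restrict to an interval $(t_*-\delta,t_*)$ containing infinitely many reflections, accumulating only at $t_*$. The point $p=\ga(t_*)$ is a limit point of the trajectory, which is Lipschitz. By finiteness of the wall collection and closedness of the walls, $p$ lies on the boundary of every wall hit infinitely often near $t_*$. Call the set of such walls $W_1,\dots,W_k$; all other walls are at positive distance from $p$. The case $k=1$ is excluded first: near a single smooth convex wall, a reflection sends the trajectory to the side of the supporting hyperplane away from the wall, and strict convexity of the wall prevents returning to it without hitting another wall. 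So $k\ge2$ and $p$ is a "corner" point.

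The main idea is a Minkowski analogue of the Vaserstein--Galperin argument: find a quantity that is monotone along the trajectory and changes by a definite amount at reflections off walls near $p$.

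\textbf{Step 1 (reflection law).} The variational definition gives that at a reflection off a wall with outer normal $\nu$ at the hit point, the incoming and outgoing unit velocities $v_-,v_+$ satisfy
\[
d\Norm(v_+)-d\Norm(v_-)\in \R_{>0}\,\nu^*,
\]
where $d\Norm$ is the Legendre map (derivative of the norm) sending unit vectors to unit covectors, and $\nu^*$ is the conormal of the wall, i.e.\ a covector positive on the outward direction. Thus the dual momentum $\xi=d\Norm(\dot\ga)$ is constant on segments and jumps by positive multiples of the conormals.

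\textbf{Step 2 (monotone functionals).} For each wall $W_i$ through $p$, convexity gives that $\langle\nu^*_i(q),\,q-p\rangle\ge 0$ for every hit point $q\in\pd W_i$, with $\nu^*_i(q)$ the conormal at $q$. Consider the function $f(t)=\langle\xi(t),\ga(t)-p\rangle$. On free segments $f'=\langle\xi,\dot\ga\rangle=\|\dot\ga\|=1$, using the duality $\langle d\Norm(v),v\rangle=\|v\|$. At reflections $f$ jumps by $\lambda\langle\nu^*_i,q-p\rangle\ge0$. Hence $f$ is nondecreasing with slope at least $1$. Also $|f|\le \|\xi\|_*\,\|\ga-p\|=\|\ga-p\|\to0$ as $t\to t_*$. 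This is consistent, so on its own it gives no contradiction. The plan is therefore to additionally use the "angle" quantities, namely the components of $\xi$ pairing with directions in the cone of normals. We want to show that $\xi$ can jump only finitely many times: the sequence of dual momenta lies on the compact unit dual sphere, and each jump increases a suitable linear functional $\langle\xi,e\rangle$, where $e$ is chosen in the interior of the dual cone of $\{\nu^*_i(p)\}$. Such an $e$ exists only if the conormals at $p$ lie in an open half-space. When they do not, replace $p$ by the affine subspace of common tangency and reduce dimension by projection. I would handle this by induction on $k$ or on dimension, as Galperin does.

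\textbf{Step 3 (definite increments; main obstacle).} Monotonicity of $\langle \xi,e\rangle$ alone yields only convergence of $\xi$, not finiteness. One needs a lower bound on the jump sizes. I would try to show that $\xi(t)\to\xi_*$, so that the trajectory eventually is nearly a straight line with direction $v_*$. Near $p$, each wall is then hit at an angle bounded away from grazing unless $v_*$ is tangent to it. Transversal walls give jumps of definite size in $\langle\xi,e\rangle$, hence occur only finitely often. The remaining walls tangent to $v_*$ form a smaller family, to which the induction applies, after projecting along $v_*$ with the quotient norm. The hard point is exactly this: making the reduction work in a non-Euclidean norm. The reflection law is not linear in $\xi$, and the projection of a Minkowski billiard is not obviously a Minkowski billiard. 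Strict convexity and smoothness of the norm, via uniform continuity of $d\Norm$ and its inverse on the sphere, should let us compare with the linearized Euclidean model at $v_*$ up to small errors, which suffices for the finiteness count.
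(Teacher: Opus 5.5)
There is a genuine gap, and it lies where you dismiss your own Step~2. Step~3 is not a proof. You need a covector $e$ in the interior of the dual cone of the conormals at $p$, and no such $e$ exists in basic examples: for two tangent discs, $\nu_1^*(p)=-\nu_2^*(p)$. The induction meant to repair this, via projection along $v_*$, runs into the facts you acknowledge yourself: a quotient of a Minkowski billiard is not a Minkowski billiard, and the reflection law is nonlinear in $\xi$. ``Comparison with a linearized Euclidean model up to small errors'' is not substantiated for a count that must come out exactly finite. As written, finiteness is not established.

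You do not need Step~3 at all. Your function $f(t)=\langle\xi(t),\ga(t)-p\rangle$ is the paper's distance momentum, and $f(t_2)-f(t_1)\ge t_2-t_1$ (Lemma~\ref{l:monotone}) already proves the theorem. The bound is sharp, and sharpness forces rigidity.
\begin{itemize}
\item Let $t_2\uparrow t_*$ and use $f(t_2)\le\|\ga(t_2)-p\|\to0$. This gives $-f(t_1)\ge t_*-t_1$.
\item We have $-f(t_1)=\langle\xi(t_1),p-\ga(t_1)\rangle\le\|p-\ga(t_1)\|$. Note the reversed distance here: the norm need not be symmetric, so your bound $|f|\le\|\ga-p\|$ is slightly off, though harmlessly. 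Hence $\|p-\ga(t_1)\|\ge t_*-t_1$.
\item Unit speed gives, for every $s\in[t_1,t_*]$,
\[
\|\ga(s)-\ga(t_1)\|+\|p-\ga(s)\|\le (s-t_1)+(t_*-s)=t_*-t_1\le\|p-\ga(t_1)\| .
\]
\item So the triangle inequality is an equality, and strict convexity puts $\ga(s)$ on the segment $[\ga(t_1),p]$.
\end{itemize}
Thus $\ga|_{[t_1,t_*]}$ is a straight segment with no reflections, contradicting accumulation at $t_*$. This is exactly the paper's argument: Corollary~\ref{cor:hinge comparison} followed by the equality case of the triangle inequality.

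Your sentence ``this is consistent, so on its own it gives no contradiction'' is the error. You checked that a function of slope at least $1$ can tend to $0$. You did not combine the lower bound $-f\le\|p-\ga\|$ with the upper bound that unit speed imposes on $\|p-\ga(t_1)\|$.

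Two parts of your setup then become unnecessary: the separate treatment of $k=1$, and the choice of a ``first'' accumulation point. Reflection moments are isolated in the interior of $I$ by definition, so accumulation can only occur at an endpoint. The left endpoint reduces to the right one by reversing time and passing to the opposite norm.
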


The theorem implies that the only obstruction to an infinite extension
of a billiard trajectory is a ``direct hit in a corner''
after a finite number of reflections, see Corollary~\ref{cor:extension}.

Note that, in general, there is no uniform bound
on the local number of reflections in Theorem~\ref{t:time} without additional assumptions.
For example (see \cite{Gal81}*{\S5}),
consider a billiard in the complement of the union of two tangent round discs
in the Euclidean plane
and trajectories with initial data close to the common tangent line of the circles.
For each of these trajectories the number of reflections is finite
but this number can be made arbitrarily large by choosing suitable initial data.

In view of such examples one naturally asks whether
a billiard trajectory can accumulate infinite length
and infinitely many reflections while remaining
in a small neighbourhood of a single point.
The following theorem shows that this is impossible.

\begin{theorem}\label{t:space}
In a semi-dispersing Minkowski billiard,
for any point $p$ in the closure of the billiard domain and any $\ep>0$,
there exists a neighborhood $U\ni p$ such that
any billiard trajectory entirely contained in $U$ has length at most $\ep$.
\end{theorem}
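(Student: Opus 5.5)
We prove Theorem~\ref{t:space} by induction on the number $k$ of walls whose boundary contains $p$. If $p$ lies on no wall boundary, we take $U$ to be a small ball around $p$ that is disjoint from all walls. A trajectory inside $U$ is then a single straight segment, because a billiard trajectory between reflections is a norm geodesic, which is a straight segment since the norm is strictly convex. Its length is at most $\operatorname{diam} U$, so it suffices to choose the radius smaller than $\ep/2$.

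For the inductive step, suppose $p\in\partial W_1\cap\dots\cap\partial W_k$ and $p$ lies on no other wall boundary. We shrink $U$ so that it meets only $W_1,\dots,W_k$. The key tool will be a monotone ``Lyapunov'' quantity adapted to the Minkowski reflection law. Let $v$ be the unit velocity and $\ell=d\|\cdot\|(v)$ the corresponding dual covector. In the variational formulation, at a reflection off $W_i$ at a point $q$, the covector jumps by a nonnegative multiple of the outer conormal $\nu_i(q)$ of $W_i$, i.e.\ $\ell^+-\ell^-=\lambda\nu_i$ with $\lambda\ge 0$. Convexity of $W_i$ gives a monotonicity property: for any fixed point $x$ in $W_i$, the function $t\mapsto \ell(t)(\gamma(t)-x)$ is nondecreasing. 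It has derivative $\ell(v)=\|v\|=1$ along segments, and it jumps by $\lambda\,\nu_i(\gamma-x)\ge0$ at reflections off $W_i$, since $x\in W_i$.

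We apply this with $x=p$, which lies in all $W_i$ for $i\le k$. Then $f(t)=\ell(t)(\gamma(t)-p)$ satisfies $f'\ge 1$ between reflections and never decreases at reflections. On the other hand $|f|\le C\|\gamma-p\|\le C\operatorname{diam}U$, where $C$ bounds the dual norm of unit covectors. Hence the length $L$ of any trajectory in $U$ satisfies $L\le f(\text{end})-f(\text{start})\le 2C\operatorname{diam}U$, and choosing $\operatorname{diam}U<\ep/(2C)$ finishes the argument.

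Written this way, the argument does not even need induction. The main step to verify is the sign of the jump in $f$, because the reflection law must be derived from the variational definition in \S\ref{sec:prelim}. Stationarity of length with the free point $q$ on $\partial W_i$ yields $\ell^- -\ell^+ \perp T_q\partial W_i$, so $\ell^+-\ell^-=\lambda\nu_i(q)$. The sign $\lambda\ge0$ should follow from the outgoing direction pointing out of $W_i$ and the incoming direction pointing into it. Indeed, by convexity of the unit ball, $\ell^-(v^+)\le 1=\ell^+(v^+)$, so $\lambda\,\nu_i(v^+)\ge0$, and $\nu_i(v^+)>0$ for a transversal reflection. Tangential or grazing hits give $\lambda=0$. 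Finally, $\nu_i(q-p)\ge0$ holds because $p\in W_i$ and $W_i$ is convex. Simultaneous hits of several walls at a corner are excluded by the definitions, since such trajectories are regarded as defined only up to that moment.
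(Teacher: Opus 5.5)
Your argument is correct and is essentially the paper's proof. Your $f(t)=\ell(t)(\gamma(t)-p)$ is exactly the distance momentum $M_{\gamma,p}$, and its monotonicity (slope $1$ on segments, nonnegative jumps because $\LL(v_+)-\LL(v_-)$ is positive on the inward half-space while $q-p$ lies in its closure by convexity of a wall containing $p$) is Lemma~\ref{l:monotone}. Your dual-norm bound on $f$ is Lemma~\ref{l:momentum bounds}, applied on $U=B^\pm_r(p)$ with $r=\min\{\ep/2,r_\Omega(p)\}$. The induction framing is superfluous, as you note yourself, and using the one-sided bounds $f(t_2)\le\|\gamma(t_2)-p\|$ and $f(t_1)\ge-\|p-\gamma(t_1)\|$ instead of an asymmetry constant $C$ gives the cleaner estimate $L<2r$.
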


The proofs of Theorems~\ref{t:time} and \ref{t:space}
are given in \S\ref{sec:proofs}.
They are based on a special semi-invariant,
which we call the \textit{distance momentum}.
The definition of the distance momentum and the proof
of its key properties are given in \S\ref{sec:momentum}.
The preliminary section \S\ref{sec:prelim} introduces notation
and discusses general properties of Minkowski billiards.

\paragraph{Motivations.}
As mentioned above, the best results on the number of reflections
in Euclidean semi-dispersing billiards
have been obtained using geometry of Alexandrov spaces,
see \cites{BFK98}, \cite{BFK98b}, and~\cite{AKP19}.
They are based on the idea that a billiard trajectory is a geodesic
in a metric space obtained by gluing copies of~$\R^n$
along convex sets corresponding to the billiard walls.
By Reshetnyak's gluing theorem, the resulting space
has non-positive Alexandrov curvature, that is, it belongs to the CAT(0) class.
Distance inequalities in CAT(0) spaces play a key role in estimating
the number of reflections.

Alexandrov spaces can be regarded as a non-smooth generalization
of Riemannian manifolds.
An interesting problem is to find a similar generalisation for Finsler manifolds.
(Finsler manifolds differ from Riemannian ones
in that the norms on tangent spaces are not assumed Euclidean).
The definition of bounded curvature in the Alexandrov sense
does not apply to Finsler metrics;
no normed spaces other than Euclidean ones satisfy it.
A similar notion of a space of non-positive curvature
in the sense of Busemann \cite{Bus48}
includes all normed spaces with strictly convex norms.
However for Finsler metrics, the Busemann non-positive
curvature condition is also too restrictive, see \cite{IL19}.
For example in the two-dimensional case,
among non-Riemannian Finsler metrics only flat metrics satisfy this condition.
Moreover, Reshetnyak's gluing theorem does not hold for Busemann non-positive curvature, see \cite{BI13}.

On the other hand, for polyhedral Finsler spaces
(glued from simplices of normed spaces along isometries of faces),
a globalization theorem for uniqueness and minimality of geodesics holds,
see~\cite{BI13}.
Trajectories of semi-dispersing billiards are also related
to the globalization of minimality of geodesic
in a suitable space (compare Corollary~\ref{cor:hinge comparison} below with Comparison Lemma 2.1 in \cite{BFK98}).
These observations indicate that there might be a natural class of spaces
including CAT(0)-spaces and normed spaces with strictly convex norms
and featuring the global minimality of geodesics
and an analogue of Reshetnyak's gluing theorem.

\section{Minkowski Billiards}
\label{sec:prelim}

In this section we introduce notation and collect facts
required for the formulation of the main theorems and the
reflection law in semi-dispersing Minkowski billiards.
A detailed discussion of Minkowski billiards can be found in~\cite{GT02}.

\subsection{Definitions and Notation}\label{subsec:prelim}

\begin{definition} \label{d:minkowki norm}
A \textit{Minkowski space} is a finite-dimensional real vector space $X$
endowed with a \textit{Minkowski norm}, that is, a continuous function
$\Norm\co X\to\R$ satisfying the following conditions:

\begin{enumerate}

\item
Positive definiteness: $\|x\|>0$ for all $x\in X\setminus\{0\}$.
\item
Positive homogeneity: $\|tx\|=t\|x\|$ for all $x\in X$ and $t\ge 0$.
\item
Subadditivity: $\|x+y\|\le \|x\|+\|y\|$ for all $x,y\in X$.  
Recall that for positively homogeneous functions, subadditivity is equivalent to convexity.
\item
Strict convexity, in the sense that the set $\{x\in X: \|x\|=1\}$ (called the \textit{indicatrix} or the \textit{unit sphere} of the norm) contains no line segments. This condition is equivalent to the subadditivity condition being strict for linearly independent vectors $x,y$.
\item
$C^1$-smoothness on $X\setminus\{0\}$.

\end{enumerate}

\end{definition}

For the sake of brevity, the Minkowski norms will be referred to simply as norms.
Note that the Minkowski norm is not assumed symmetric:
in general, $\|x\|\ne\|{-x}\|$.
Non-symmetric norms are common in Finsler geometry
where our motivations come from.
The lack of symmetry does not affect the arguments
except that one has to be careful with some formulas.
For example, the distance $\|x-y\|$ between points $x$ and $y$
is not symmetric in $x$ and~$y$.
We denote by $\Norm^-$ the opposite norm,
defined by $\|x\|^-=\|{-x}\|$ for all $x\in X$.

In the literature, Minkowski norms are usually assumed
to be $C^\infty$-smooth and quadratically strictly convex,
but for the purposes of this paper the above conditions are sufficient.
Everywhere in the sequel the word ``smooth'' means $C^1$.

A \textit{Minkowski billiard} is determined by
a Minkowski space $X=(X^n,\Norm)$ and a domain $\Omega\subset X$,
called the \textit{billiard table}.
We denote by $\ov\Omega$ the closure of $\Omega$,
and by $\pd\Omega$ its boundary.
A point $p\in\pd\Omega$ is called \textit{smooth} if,
in a neighborhood of $p$, the boundary $\pd\Omega$ is a smooth hypersurface.
For a smooth point $p\in\pd\Omega$,
we denote by $T_p\pd\Omega$ the hyperplane in $X$
containing $0$ and parallel to the tangent hyperplane to $\pd\Omega$ at $p$.

A billiard trajectory can be informally described
as a path of a particle that moves uniformly inside $\Omega$
and when it hits a smooth point $p\in\pd\Omega$,
it reflects from the boundary according to
the following variational principle:
if $q_1,q_2$ are sufficiently close points of the trajectory
before and after the reflection,
then $p$ is a critical point of the function
\be\label{e:local length}
  x \mapsto \|x-q_1\| + \|q_2-x\|, \qquad x\in \pd\Omega.
\ee
We assume that trajectories are parametrized with unit speed.
The formal details are given in the following definition.

\begin{definition}\label{d:trajectory}
The \textit{trajectory} of the Minkowski billiard $(X,\Norm,\Omega)$
is a continuous curve $\ga\co I\to\ov\Omega$,
parametrized by an interval $I\subset\R$,
such that for any interior point $t\in I$ there exists $\delta>0$ such that,
for all $t_1\in(t-\delta,t)$ and $t_2\in(t,t+\delta)$
at least one of the following conditions holds:
\begin{enumerate}
\item[(1)]
The restriction $\ga|_{[t_1,t_2]}$ parametrizes
a segment of an affine line in $X$
with unit speed (i.e.\ $\|\dot\ga\|=1$).
\item[(2)]
The point $p=\ga(t)$ is a smooth boundary point of $\pd\Omega$,
the restrictions $\ga|_{[t_1,t]}$ and $\ga|_{[t,t_2]}$
are unit-speed parametrizations of the segments $[q_1,p]$ and $[p,q_2]$
where $q_1=\ga(t_1)$ and $q_2=\ga(t_2)$,
and $p$ is a critical point of \eqref{e:local length} on $\pd\Omega$.
\end{enumerate}
The values of $t$ for which only the second condition
holds are called \textit{reflection moments},
and the corresponding points $\ga(t)$ are called \textit{reflection points}.
The \textit{length} of a trajectory $\ga$ is the length of the interval~$I$.

\end{definition}

We emphasize that the conditions in this definition apply
only the interior points of the interval $I$;
we do not impose any conditions on
the behavior of the trajectory near its endpoints.
One may assume that the trajectories are parametrized by open intervals
and the endpoint values are defined by continuity if necessary.

\begin{definition}
A Minkowski billiard is called \textit{semi-dispersing}
if the billiard table has the form
\be\label{e:walls}
 \Omega = X \setminus \bigcup_{i=1}^m W_i,
\ee
where $W_1,\dots,W_m\subset X$ are closed convex sets with nonempty interiors and smooth boundaries.
The sets $W_i$ are referred to as the \textit{walls}.
\end{definition}

\subsection{Reflection Law}\label{subsec:legendre}

To describe reflections in a more convenient form
we need the Legendre transform of the norm,
see~\cite{GT02}*{\S3}.
Let $(X,\Norm)$ be a Minkowski space.
Consider the dual space $X^*$,
it carries the \textit{dual norm} $\Norm^*$ defined by
\be\label{e:dual norm}
 \|f\|^* = \sup \{ f \cdot x : x\in X,\, \|x\|\le 1 \},
 \qquad f\in X^*,
\ee
where the dot in $f \cdot x$ denotes the application of a co-vector to a vector.
The dual norm is also positively 1-homogeneous and subadditive
but not necessarily symmetric.
One can check that $\Norm^*$ is also a Minkowski norm in the sense
of Definition~\ref{d:minkowki norm} but we do not use this fact.

The \textit{Legendre transform} of the norm $\Norm$
is a positively 1-homogeneous map $\LL=\LL_{\Norm}\co X\to X^*$ defined by
\be\label{e:legendre}
 \LL(x) = d_x\big(\tfrac12\Norm^2\big) = \|x\|\, d_x(\Norm), \qquad x\in X,
\ee
where $d_x$ denotes the differential of a function at~$x$.
In the Lagrangian mechanics language,
$\LL$ is the Legendre transform mapping for the Lagrangian $\tfrac12\Norm^2$.
We will apply $\LL$ to unit vectors only.
We need the following simple properties.

\begin{lemma}\label{l:legendre basic}
Let $(X,\Norm)$ be a Minkowski space,
$v\in X$, $\|v\|=1$, and $f=\LL_\Norm(v)$.
Then
\begin{enumerate}
 \item
 $\|f\|^*=1$ and $f\cdot v=1$.
 In particular, the supremum in \eqref{e:dual norm}
 is attained at $x=v$.
 \item
 The vector $v$ is uniquely determined by the equalities $\|v\|=1$ and $f\cdot v=1$. That is, if $w\in X$ satisfies $\|w\|=1$ and $f\cdot w=1$, then $w=v$.
\end{enumerate}
\end{lemma}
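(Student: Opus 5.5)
Both parts will follow from Euler's identity for the positively homogeneous function $\Norm$, combined with its convexity (subadditivity) and the strict convexity of the unit sphere.

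\emph{Step 1: $f\cdot v=1$.} Since $\Norm$ is positively 1-homogeneous and $C^1$ away from $0$, differentiating $t\mapsto\|tv\|=t\|v\|$ at $t=1$ gives Euler's identity $d_v(\Norm)\cdot v=\|v\|$. As $\|v\|=1$, the definition \eqref{e:legendre} gives $f=d_v(\Norm)$, and therefore $f\cdot v=\|v\|=1$.

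\emph{Step 2: $\|f\|^*=1$.} Because $\Norm$ is convex, its differential at $v$ gives a supporting affine function: $\|x\|\ge \|v\|+f\cdot(x-v)=f\cdot x$ for every $x\in X$, where the last equality uses Step 1. Hence $f\cdot x\le 1$ whenever $\|x\|\le1$, so $\|f\|^*\le1$. Since $f\cdot v=1$ with $\|v\|=1$, the supremum in \eqref{e:dual norm} equals $1$ and is attained at $x=v$. The convexity inequality is standard: the function $s\mapsto\|v+s(x-v)\|$ is convex and differentiable at $s=0$, so it lies above its tangent line at $s=0$, and we evaluate at $s=1$.

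\emph{Step 3: uniqueness.} Let $\|w\|=1$ and $f\cdot w=1$, and put $u=\tfrac12(v+w)$. Then $f\cdot u=1$. Since $\|f\|^*=1$, this forces $\|u\|\ge1$, because $f\cdot u\le\|f\|^*\|u\|$ (apply \eqref{e:dual norm} to $u/\|u\|$). On the other hand, subadditivity gives $\|u\|\le1$. So $\|u\|=1$.

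Now suppose $w\ne v$. If $v$ and $w$ are linearly independent, the strict form of subadditivity stated in Definition~\ref{d:minkowki norm} gives $\|v+w\|<2$, a contradiction. If they are dependent, then $w=\lambda v$. Positive $\lambda$ forces $\lambda=1$ because both vectors are unit. Negative $\lambda$ gives $f\cdot w=\lambda<0\ne1$. Hence $w=v$.

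The only point that needs care is this last case split: strict convexity in the paper is phrased for linearly independent vectors, so the collinear case has to be handled separately, as above. Everything else is routine.
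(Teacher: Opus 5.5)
Your proof is correct and follows essentially the same route as the paper. You get $f\cdot x\le\|x\|$ from convexity with equality at $x=v$, and you get uniqueness by showing that $\tfrac12(v+w)$ has norm $1$, which contradicts strict convexity; the paper instead shows the whole segment $[v,w]$ lies on the unit sphere and uses the ``contains no line segments'' form of strict convexity directly, which avoids your collinear case split.
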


\begin{proof}
1. Let $x\in X$ and $\|x\|\le 1$.
By the subadditivity of the norm,
for every $t>0$ we have \(\|v+tx\| \le \|v\|+t\|x\|\le 1+t\),
hence
\[
 f\cdot x = d_v(\Norm)\cdot x = \frac d{dt}\Big|_{t=0} \|v + tx\| \le 1 .
\]
Therefore $\|f\|^*\le 1$. At $x=v$, all inequalities above turn into equalities,
hence $f\cdot v = 1$ and $\|f\|^*=1$.

2. Suppose the contrary.
Then there exists $w\in X$ such that $\|w\|=1$ and $f\cdot w=1$ but $w\ne v$.
We are going to show that the segment $[v,w]$ is contained
in the unit sphere of~$\Norm$.
Pick $x\in[v,w]$.
The equalities $f\cdot v=f\cdot w=1$ imply that $f\cdot x=1$,
hence $\|x\|\ge 1$ since $\|f\|^*=1$.
On the other hand, $\|x\|\le 1$ since
$\|v\|=\|w\|=1$ and the norm is subadditive.
Thus $\|x\|=1$. Since $x$ was arbitrary on the segment $[v,w]$,
we conclude that this segment in contained in the unit sphere of the norm.
This contradicts the strict convexity of the norm
from Definition~\ref{d:minkowki norm}.
\end{proof}

Geometrically, the first part of Lemma~\ref{l:legendre basic} means that
the indicatrix hyperplane $f^{-1}(1)$
is tangent to the unit sphere of $\Norm$ at~$v$,
while the second part asserts uniqueness of this touch point.
This implies that $\LL$ is a homeomorphism
between the unit spheres of $\Norm$ and $\Norm^*$.

The following lemma reformulates the reflection law of Minkowski billiards.
In the sequel this reformulation will be used instead of the definition.

\begin{lemma}\label{l:reflection law}
Let $(X,\Norm,\Omega)$ be a Minkowski billiard,
$p\in\ov\Omega$ a smooth boundary point,
and $\ga$ a two-segment polygonal chain in $\ov\Omega$ with a break at~$p$, parametrized with unit speed.
Let $v_-$ and $v_+$ be the velocity vectors of $\ga$ before and after the break.

Then $\ga$ is a billiard trajectory if and only if
\be\label{e:reflection}
 (\LL(v_+)-\LL(v_-))|_{T_p\pd\Omega} = 0.
\ee
\end{lemma}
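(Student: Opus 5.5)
The plan is to compute the derivative of the function \eqref{e:local length} at $p$ directly and check that its vanishing on $T_p\pd\Omega$ is exactly \eqref{e:reflection}. The chain is unit-speed with segments $[q_1,p]$ and $[p,q_2]$. So $p-q_1=a\,v_-$ and $q_2-p=b\,v_+$ for some $a,b>0$.

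We use that the norm is $C^1$ on $X\setminus\{0\}$ and positively 1-homogeneous. Then $d_x\Norm$ is positively 0-homogeneous, so by \eqref{e:legendre} we get $d_{v}\Norm=\LL(v)$ for every unit vector $v$.

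Consider $F(x)=\|x-q_1\|+\|q_2-x\|$. For $x$ near $p$, both $x-q_1$ and $q_2-x$ are nonzero, so $F$ is $C^1$ near $p$. Its differential at $p$ is
\[
d_pF = d_{p-q_1}\Norm - d_{q_2-p}\Norm = d_{v_-}\Norm - d_{v_+}\Norm = \LL(v_-)-\LL(v_+).
\]
Since $\pd\Omega$ is a $C^1$ hypersurface near $p$, the point $p$ is critical for $F|_{\pd\Omega}$ exactly when $d_pF$ vanishes on the tangent space $T_p\pd\Omega$. This is precisely \eqref{e:reflection}.

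It remains to match this with Definition~\ref{d:trajectory}, which is the only step needing care. First take $t$ to be the break moment. Condition~(2) at $t$ requires criticality of $p$ for the function built from $q_1=\ga(t_1)$ and $q_2=\ga(t_2)$ for all nearby $t_1,t_2$. By the computation above, the differential does not depend on the choice of $q_1,q_2$ on the segments, so criticality for one pair is equivalent to criticality for all pairs.

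If $v_-=v_+$, condition~(1) holds at $t$ instead. In that case \eqref{e:reflection} holds trivially, so the equivalence is again consistent.

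At all other interior times the curve is locally a unit-speed straight segment, so condition~(1) holds there automatically.

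Thus $\ga$ is a trajectory if and only if condition~(2) holds at the break when $v_-\ne v_+$, and this is equivalent to \eqref{e:reflection}.
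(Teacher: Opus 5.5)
Your proof is correct and follows the same route as the paper. Both arguments compute the differential of \eqref{e:local length} at $p$ as $\LL(v_-)-\LL(v_+)$, using $d_x\Norm=\LL(x/\|x\|)$, and treat the degenerate case $v_-=v_+$ separately. Your remarks that the criticality condition does not depend on the choice of $q_1,q_2$, and that condition~(1) holds automatically at all other interior times, only make explicit what the paper leaves implicit.
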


\begin{proof}
The statement easily follows from the arguments in \cite{GT02}*{\S3};
see in particular \cite{GT02}*{Corollary 3.2}.
For completeness we give a proof consistent with the
above definitions and notation.

Let $q_1$ and $q_2$ be points on $\ga$ before and after the break.
The second equality in \eqref{e:legendre} implies
that the differential of the norm at any point equals
the Legendre transform of the corresponding unit vector:
\[
 d_x\Norm = \LL(x/\|x\|), \qquad x\in X\setminus\{0\}.
\]
Therefore, the differentials of the two terms in \eqref{e:local length} at~$p$
are $\LL(v_-)$ and $-\LL(v_+)$, respectively.
Thus \eqref{e:reflection} is equivalent to $p$ being a critical point
of the function \eqref{e:local length} on $\pd\Omega$.

This proves the lemma in the case when $\ga$ has a genuine break at $p$.
If it degenerates into a straight segment,
then $v_-=v_+$ and \eqref{e:reflection} is trivial.
\end{proof}

\begin{remark}\label{rem:not tangent}
Lemma~\ref{l:reflection law} implies that at the reflection moment neither of the adjacent trajectory segments is tangent to the boundary. Indeed, suppose for instance that $v_-\in T_p\pd\Omega$.
Then by\eqref{e:reflection} we have
\(
(\LL(v_+)-\LL(v_-))\cdot v_- = 0,
\)
hence
\[
 \LL(v_+)\cdot v_- = \LL(v_-)\cdot v_- = 1,
\]
which implies $v_+=v_-$, since $v_+$ is the unique unit vector $v$ such that $\LL(v_+)\cdot v=1$ (see Lemma~\ref{l:legendre basic}).
The case when $v_+\in T_p\pd\Omega$ is similar.
\end{remark}

\subsection{Extension of trajectories}
The following lemma address the question
of the extendability of the billiard trajectory after hitting the boundary.
It is not used in the proofs of Theorems~\ref{t:time} and~\ref{t:space}
but confirms that our definitions make sense.
We could not find this statement in the literature
so we give a detailed proof here.

\begin{lemma}\label{l:reflection exists}
Let $X=(X,\Norm)$ be a Minkowski space,
$H\subset X$ a hyperplane containing~$0$,
$v_-\in X\setminus H$ and $\|v_-\|=1$.
Then there exists a unique vector $v_+\in X$ such that
$v_+\ne v_-$, $\|v_+\|=1$, and
\be\label{e:reflection in H}
(\LL(v_+)-\LL(v_-))|_H = 0,
\ee
where $\LL$ is the Legendre transform of the norm $\Norm$.
Furthermore $v_+$ and $v_-$ belong to different open half-spaces
bounded by~$H$.
\end{lemma}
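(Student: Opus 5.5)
Fix a linear functional $h\in X^*$ with $\ker h=H$, normalized so that $h\cdot v_->0$. Condition \eqref{e:reflection in H} says precisely that $\LL(v_+)=\LL(v_-)+\lambda h$ for some $\lambda\in\R$. This reduces the problem to a one-parameter question: for which $\lambda$ does the covector $f_\lambda=\LL(v_-)+\lambda h$ lie in the image under $\LL$ of the unit sphere? By Lemma~\ref{l:legendre basic} and the remark after it, $\LL$ is a bijection from the unit sphere of $\Norm$ onto the unit sphere of $\Norm^*$. So the unit vectors $v_+$ satisfying \eqref{e:reflection in H} correspond bijectively to the values of $\lambda$ with $\|f_\lambda\|^*=1$, with $v_+=\LL^{-1}(f_\lambda)$. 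The value $\lambda=0$ gives $v_+=v_-$. Hence existence and uniqueness amount to showing that the function
\[
\phi(\lambda)=\|\LL(v_-)+\lambda h\|^*
\]
takes the value $1$ at exactly one $\lambda\ne0$.

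To study $\phi$, I would use only convexity of $\Norm^*$. The function $\phi$ is convex in $\lambda$ and $\phi(0)=1$. Moreover $\phi(\lambda)\ge f_\lambda\cdot v_-=1+\lambda\, h\cdot v_-$, and this lower bound is an affine function which also equals $1$ at $\lambda=0$. It is therefore a supporting line of $\phi$ at $0$ with slope $h\cdot v_->0$. Consequently $\phi>1$ for all $\lambda>0$, so any solution has $\lambda<0$.

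For the existence of a negative solution, note that $\phi(\lambda)\to\infty$ as $\lambda\to-\infty$, since $\|h\|^*>0$ and also $\|{-h}\|^*>0$. By the intermediate value theorem it suffices to find some $\lambda<0$ with $\phi(\lambda)<1$; this means showing that the right derivative of $\phi$ at $0$ from the left is strictly positive. I would argue by contradiction. Suppose $\phi\ge1$ on $(-\ep,0)$. Convexity together with the supporting line then give $\phi(\lambda)\ge1$ for all $\lambda<0$. At the same time, the left derivative of $\phi$ at $0$ is at most the slope of the support line, namely $h\cdot v_-$, which is positive.

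The cleaner route is to observe that $\phi(\lambda)\ge\max(1,\,1+\lambda h\cdot v_-)$ cannot be forced. Instead, take a vector $w\in H$-direction witnessing smoothness: a supremum $\|f_\lambda\|^*$ is attained at some unit vector $x_\lambda$. If $\phi(\lambda)=1$ on an interval of $\lambda<0$ near $0$, then letting $\lambda\to0^-$ the maximizers converge, by strict convexity and the uniqueness in Lemma~\ref{l:legendre basic}, to $v_-$. The derivative computation then gives the one-sided derivative of $\phi$ at $0$ equal to $h\cdot v_-\ne0$, since $\|\cdot\|^*$ is differentiable at $\LL(v_-)$ in this direction. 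This is exactly where the uniqueness of the supporting point, i.e.\ strict convexity of $\Norm$, enters. So $\phi'(0)=h\cdot v_->0$, hence $\phi<1$ slightly to the left of $0$, and a root $\lambda_*<0$ exists.

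Uniqueness follows from convexity: a convex function equal to $1$ at $0$ and at $\lambda_*$ with $\phi<1$ between them cannot return to $1$ again for $\lambda<\lambda_*$. If it did, then by convexity $\phi\le1$ on the whole interval between the two roots, and it would have to be $\equiv1$ on a segment, contradicting $\phi<1$ near $0^-$.

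Finally, $h\cdot v_+=\LL(v_+)\cdot v_+$-type reasoning gives the separation. From $1=\phi(\lambda_*)\ge f_{\lambda_*}\cdot v_-$ we obtain a bound, but more directly $1=f_{\lambda_*}\cdot v_+=\LL(v_-)\cdot v_++\lambda_* h\cdot v_+\le 1+\lambda_*h\cdot v_+$. Since $\lambda_*<0$, this yields $h\cdot v_+\le0$. Equality $h\cdot v_+=0$ would give $\LL(v_-)\cdot v_+=1$, hence $v_+=v_-$ by Lemma~\ref{l:legendre basic}, which is impossible because $v_-\notin H$. Thus $h\cdot v_+<0$, so $v_+$ and $v_-$ lie in opposite open half-spaces. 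The main obstacle is the derivative step, i.e.\ establishing that $\phi'(0)=h\cdot v_->0$ via differentiability of $\Norm^*$ at $\LL(v_-)$ coming from strict convexity of $\Norm$.
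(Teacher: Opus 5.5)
Your proposal is correct and follows the paper's proof essentially step by step. It uses the same one-parameter family $\LL(v_-)+\lambda h$, the convex function $\phi(\lambda)=\|\LL(v_-)+\lambda h\|^*$ with supporting line $1+\lambda\,h\cdot v_-$, coercivity plus convexity to get exactly one root $\lambda_*<0$, and the same separation argument via uniqueness of the maximizer of $\LL(v_-)$. The paper simply asserts that $\phi$ is differentiable at $0$, whereas you justify the key derivative step through convergence of the maximizers $x_\lambda\to v_-$. You should write that step out as the one-line estimate $(\phi(\lambda)-1)/\lambda\ge h\cdot x_\lambda\to h\cdot v_->0$ for $\lambda<0$, and drop the abandoned attempts that precede it.
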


Lemma \ref{l:reflection exists} ``is geometrically obvious''
from the following (see Fig.~\ref{fig:ellipse}).
Consider the indicatrix $G_-$ of~$\LL(v_-)$.
This is an affine hyperplane which
(except the case of parallelism) intersects $H$
in some affine subspace $Y$ of dimension $n-2$ where $n=\dim X$.
The subspace $Y$ does not intersect the unit ball $S$ of the norm~$\Norm$.
The relation \eqref{e:reflection in H} is equivalent
to the property that the indicatrix of $\LL(v_+)$ also contains~$Y$.
It is clear that among the affine hyperplanes containing~$Y$
there are exactly two tangent to the unit sphere of the norm.
The second of them, denoted $G_+$, corresponds to the desired vector~$v_+$.

Below is a more formal proof, which however lacks geometric clarity.

\begin{figure}[h!]
\centering
\begin{tikzpicture}
[
    tangent/.style={
        decoration={
            markings,
            mark=at position #1 with {
                \coordinate (tangent point-\pgfkeysvalueof{/pgf/decoration/mark info/sequence number}) at (0pt,0pt);
                \coordinate (tangent unit vector-\pgfkeysvalueof{/pgf/decoration/mark info/sequence number}) at (1,0pt);
                \coordinate (tangent orthogonal unit vector-\pgfkeysvalueof{/pgf/decoration/mark info/sequence number}) at (0pt,1);
            }
        },
        postaction=decorate
    },
    use tangent/.style={
        shift=(tangent point-#1),
        x=(tangent unit vector-#1),
        y=(tangent orthogonal unit vector-#1)
    },
    use tangent/.default=1
]

\draw[
    tangent=0.15,
    tangent=0.5,
] (0,0) ellipse [x radius=3.6cm, y radius=2cm, rotate=45]
node at (1.35,-1.8) {$S$};

\draw[ use tangent, name path=tanA] (-0.75,0) -- (6.3,0) node [pos = 0.6, above] {$G_+$};

\draw[ use tangent=2, name path=tanB] (-3.55,0) -- (0.5,0) node [midway, below] {$G_-$};

\path[name intersections={of=tanA and tanB, by=I}];

\draw[name path=hor] ($(I)+(-0.5,0)$) -- ($(I)+(8.5,0)$) node[above] {$H$};
\fill[] (I) circle (0.5pt) node[above left] {$Y$};

\fill[] (tangent point-1) circle (0.5pt);
\fill[] (tangent point-2) circle (0.5pt);

\coordinate (O) at ($(I)+(6,0)$);
\fill[] (O) circle (0.5pt) node[above right] {$0$};

\draw[->, >=latex] (O) -- (tangent point-1) node[midway, above right] {$v_{+}$};
\draw[->, >=latex] (O) -- (tangent point-2) node[midway, below right] {$v_{-}$};
    
\end{tikzpicture}
\caption{Uniqueness of $v_+$ for a given $v_-$}
\label{fig:ellipse}
\end{figure}

\begin{proof}[Proof of Lemma \ref{l:reflection exists}]
Set $f_-=\LL(v_-)$ and pick a co-vector $f_0\in X^*$
such that $\ker f_0=H$ and $f_0(v_-)>0$.
Consider a straight line $\ell\subset X^*$ given by
\[
 \ell = \{ f_- + t f_0 : t\in \R \}.
\]
The conditions on $v_+$ are equivalent to requiring that $\LL(v_+)$ has unit norm, differs from $f_-$, and lies on $\ell$. Define
\[
 \varphi(t) = \|f_- + t f_0\|^*  
\]
for all \(t\in \R\).
The function $\varphi$ is differentiable everywhere
except possibly at one point where it vanishes.
Note that $\varphi(0)=f_-(v_-)=1$ and $\varphi'(0)>0$ since
$$
 \varphi(t) \ge (f_- + t f_0)(v_-) = 1 + t f_0(v_-)
$$
for all $t>0$,
therefore $\varphi'(0)\ge f_0(v_-)>0$.
Moreover $\varphi$ is a convex function and $\varphi(t)\to +\infty$ as $|t|\to\infty$.
These properties imply that the equation $\varphi(t)=1$ has exactly two roots,
$t=0$ and $t=\alpha$ for some $\alpha<0$.
Thus $\ell$ contains a unique unit co-vector $f_+=f_-+\alpha f_0$
distinct from $f_-$.
The desired vector $v_+$ is obtained from $f_+$ by the inverse Legendre transform.

It remains to check that $v_+$ and $v_-$
are located in different open half-spaces bounded by~$H$.
Assume the contrary, then $f_0(v_+)\ge 0$, hence
\[
 f_-(v_+) = f_+(v_+) - \alpha f_0(v_+) \ge 1 ,
\]
since $\alpha<0$.
This contradicts the uniqueness of $v_-$ as the vector
realizing the maximum of $f_-$ on the unit sphere of the norm
(see Lemma~\ref{l:legendre basic}).
\end{proof}

Lemmas~\ref{l:reflection law} and~\ref{l:reflection exists} imply that
any segment in $\ov\Omega$ ending at a smooth boundary point $p\in\pd\Omega$
and not tangent to the boundary,
has a unique local extension satisfying the definition
of a Minkowski billiard trajectory.
Thus any initial segment $\ga\co[0,\ep]\to\ov\Omega$ has a unique
maximal extension forward in time.
It is defined either on $[0,+\infty)$ or on some bounded interval $[0,T]$.
In the latter case it ends in one of the following three ways
(in all cases $\ga(T)\in\pd\Omega$):
\begin{enumerate}
\item[(1)] The trajectory ends with a segment
ending at a non-smooth point of the boundary.
\item[(2)] the trajectory ends with a segment tangent to the boundary at $\ga(T)$,
and no extension of this segment is contained in $\ov\Omega$;
\item[(3)] When approaching the right endpoint of $[0,T]$,
the trajectory encounters infinitely many reflections from the walls.
\end{enumerate}

For semi-dispersing billiards, the second case is clearly impossible.
Theorem~\ref{t:time} also rules out the third case.
Thus Theorem~\ref{t:time} implies the following

\begin{cor}\label{cor:extension}
In a semi-dispersing Minkowski billiard,
any maximal forward trajectory having a starting point
either has infinite length
or is a finite polygonal chain ending at a non-smooth boundary point.
\end{cor}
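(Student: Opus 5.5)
Corollary~\ref{cor:extension} should follow directly from the case analysis preceding it, combined with Theorem~\ref{t:time}. Let $\ga\co[0,T)\to\ov\Omega$ (or $[0,T]$) be a maximal forward extension of a given initial segment, and suppose its length $T$ is finite. The plan is to check which of the three termination modes (1)--(3) listed above can occur, and to show that only (1) survives. In that case the trajectory is a polygonal chain ending at a non-smooth boundary point, and the chain is finite because there are finitely many reflections.

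First I would exclude mode (3). Since $\ga$ has finite length, Theorem~\ref{t:time} says the set of reflection moments in $[0,T)$ is finite. Hence there is $t_0<T$ after which $\ga$ has no reflections. By Definition~\ref{d:trajectory}, on $(t_0,T)$ the curve is then locally, and therefore globally, a unit-speed straight segment. So $\ga$ is a finite polygonal chain, and $\ga(T)$ is defined by continuity.

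Next I would exclude mode (2), which the text calls ``clearly impossible''. Suppose the last segment arrives tangentially at a smooth point $p=\ga(T)\in\pd W_i$. Because $W_i$ is convex, the whole line containing this segment lies in a supporting hyperplane of $W_i$ at $p$, so it does not enter $\operatorname{int} W_i$. The trouble is that another wall $W_j$ might be hit exactly at $p$. But if $p$ is a smooth point of $\pd\Omega$, then near $p$ the set $\Omega$ coincides locally with the complement of the single wall $W_i$. So a short extension of the segment beyond $p$ stays in $\ov\Omega$, and this contradicts maximality.

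Finally, in the remaining case the endpoint $\ga(T)$ is a smooth boundary point reached transversally. Lemmas~\ref{l:reflection law} and~\ref{l:reflection exists} then give a reflected segment, and by Remark~\ref{rem:not tangent} this segment is non-tangent. For a convex wall $W_i$, the new segment points into the open half-space away from $W_i$, so a short piece of it lies in $\ov\Omega$, which again contradicts maximality. Therefore the endpoint must be non-smooth, which is mode (1). The only delicate step is making the smooth-point and locality argument in mode (2) precise; everything else is bookkeeping once Theorem~\ref{t:time} is available.
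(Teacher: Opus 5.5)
Your overall route is the paper's. The paper reads the corollary off the trichotomy (1)--(3) for a maximal forward extension of finite length: (3) is excluded by Theorem~\ref{t:time}, and (2) by convexity of the walls, which the paper simply calls ``clearly impossible''. Your treatment of (3) is fine, and so is your extra check that a transversal hit at a smooth point cannot be terminal.

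\textbf{The problem is in mode (2).} Your justification there is false as stated. A smooth point $p$ of $\pd\Omega$ may lie on several walls, and then near $p$ the set $\Omega$ need not coincide with the complement of any single wall. For example, in $\R^2$ let $g(x)=0$ for $x\le 0$ and $g(x)=-x^2$ for $x>0$. Take $W_1=\{y\le g(x)\}$ and $W_2=\{y\le g(-x)\}$. These are closed convex sets with $C^1$ boundaries, and $W_1\cup W_2=\{y\le 0\}$, so every boundary point is smooth. Yet for small $x>0$ the point $(x,-x^2/2)$ lies outside $W_1$ but inside $W_2$, and symmetrically for $x<0$. The same one-wall reasoning reappears in your last paragraph (``the new segment points into the open half-space away from $W_i$'').

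\textbf{The fix.} What is true, and repairs both places, is this: if $p$ is a smooth point of $\pd\Omega$, then every wall $W_j\ni p$ has $p\in\pd W_j$ and is supported at $p$ by the same hyperplane $p+T_p\pd\Omega$, with all such walls on the same side. To see it, note the following.
\begin{itemize}
\item Near $p$, the set $\Omega$ is exactly one side of the $C^1$ hypersurface $\pd\Omega$. So for every direction $v$ strictly on the $\Omega$-side of $T_p\pd\Omega$, the points $p+tv$ lie in $\Omega$ for small $t>0$.
\item Suppose $T_p\pd W_j\ne T_p\pd\Omega$, or the inner side of $W_j$ is the $\Omega$-side. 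Then some such direction also points strictly into the inner side of $T_p\pd W_j$, and hence enters $\operatorname{int}W_j$, which is disjoint from $\Omega$. This is a contradiction.
\item Walls not containing $p$ are at positive distance from $p$. Hence, if $H_+$ denotes the open half-space of directions pointing into $\Omega$, a neighbourhood of $p$ in $p+H_+$ lies in $\Omega$.
\end{itemize}
Two consequences follow. A line through $p$ tangent to $\pd\Omega$ lies in $p+T_p\pd\Omega$, so near $p$ it is a limit of points of $\Omega$ and stays in $\ov\Omega$. Merely not entering the interiors of the walls would not suffice, since a point on the boundaries of two walls can be interior to their union. Likewise, the reflected segment with $v_+\in H_+$ from Lemma~\ref{l:reflection exists} stays in $\Omega$ for small positive times. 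With this replacement your argument is complete.
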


Similar properties hold for backward extensions.
They can be reduced to forward ones by reversing the time
and replacing the Minkowski norm with its opposite.

\section{Distance momentum} \label{sec:momentum} 

Consider a semi-dispersing Minkowski billiard $(X,\Norm,\Omega)$ of the form \eqref{e:walls}.
The collection of walls $\{W_i\}_{i=1}^m$ is fixed.
Let $\ga\co I\to\ov\Omega$ be a billiard trajectory.
These notations are fixed throughout the rest of the paper.

\begin{definition}\label{d:momentum}
Let $p\in X$ and let $t\in I$ be a moment that
is neither a moment of reflection nor the endpoint of~$I$.
The \textit{distance momentum} of~$\ga$
with respect to~$p$ at~$t$ is defined by
\[
M_{\ga,p}(t) = \LL(\dot\ga(t))\cdot(\ga(t)-p) , 
\]
where $\LL$ is the Legendre transform of~$\Norm$, see~\S\ref{subsec:legendre}.
\end{definition}

\begin{remark}\label{rem:euclidean case}
If the norm $\Norm$ is Euclidean,
the definition of distance momentum can be rewritten as
\[ 
M_{\ga,p}(t) = \langle\dot\ga(t) , \ga(t)-p \rangle = \frac12 \frac d{dt} \|\ga(t)-p\|^2 , 
\]
where $\langle,\rangle$ is the scalar product associated with the norm.
For an arbitrary norm the distance momentum
does not have such a relation to the distance function to~$p$
along the trajectory
(and the distance function does not have the properties we need).
\end{remark}

\begin{lemma}\label{l:momentum bounds}
Let $\ga$, $p$, $t$ be as in Definition~\ref{d:momentum}. Then 
\[
-\|p-\ga(t)\| \le  M_{\ga,p}(t) \le \|\ga(t)-p\| . 
\]
The first inequality turns into equality
if and only if the vector $\ga(t)-p$
is proportional to $\dot\ga(t)$ with a non-positive coefficient,
and the second one turns into equality
if and only if $\ga(t)-p$ is proportional to $\dot\ga(t)$
with a non-negative coefficient.
\end{lemma}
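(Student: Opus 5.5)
Write $v=\dot\ga(t)$, $f=\LL(v)$ and $x=\ga(t)-p$, so that $M_{\ga,p}(t)=f\cdot x$. The plan is to read off both inequalities from the first part of Lemma~\ref{l:legendre basic}, namely $\|f\|^*=1$. Their equality cases should then follow from the second part of that lemma, the uniqueness of the unit vector on which $f$ attains the value~$1$.

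\emph{Upper bound.} By definition \eqref{e:dual norm} of the dual norm and positive homogeneity of $\Norm$, every $y\in X$ satisfies $f\cdot y\le \|f\|^*\,\|y\|=\|y\|$. Indeed, for $y\ne 0$ we apply the definition to $y/\|y\|$, and the case $y=0$ is trivial. Taking $y=x$ gives $M_{\ga,p}(t)\le\|\ga(t)-p\|$.

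\emph{Lower bound.} Taking $y=-x=p-\ga(t)$ gives $-f\cdot x\le\|p-\ga(t)\|$, which is the first inequality. The lack of symmetry of the norm is handled automatically because we apply the estimate to the vector $p-\ga(t)$ itself.

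\emph{Equality cases.} If $x=\lambda v$ with $\lambda\ge0$, then $f\cdot x=\lambda=\|x\|$ by Lemma~\ref{l:legendre basic}(1). Conversely, suppose $f\cdot x=\|x\|$ and $x\ne0$, and put $w=x/\|x\|$. Then $\|w\|=1$ and $f\cdot w=1$, so $w=v$ by Lemma~\ref{l:legendre basic}(2). Hence $x=\|x\|v$, which is a non-negative multiple of~$v$.

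For the other case, suppose $f\cdot x=-\|{-x}\|$ with $x\ne0$. Apply the same argument to $w=-x/\|{-x}\|$ to get $-x=\|{-x}\|\,v$, so $x$ is a non-positive multiple of~$v$. Conversely, if $x=-\lambda v$ with $\lambda\ge0$, then $f\cdot x=-\lambda=-\|p-\ga(t)\|$.

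The only step needing care is this equality characterization, and it is already contained in part~(2) of Lemma~\ref{l:legendre basic}. That part is where strict convexity of the norm enters.
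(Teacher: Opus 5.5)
Your proof is correct and follows the paper's argument essentially verbatim. Both derive $f\cdot w\le\|w\|$ from $\|f\|^*=1$, apply it to $w=\ga(t)-p$ and $w=p-\ga(t)$, and obtain the equality cases from the uniqueness in Lemma~\ref{l:legendre basic}(2); you just spell out the normalization $w=x/\|x\|$ (and leave implicit the trivial case $x=0$), which the paper leaves to the reader.
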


\begin{proof} 
Let $f=\LL(\dot\ga(t))$.
Then $\|f\|^*=1$ and $\dot\ga(t)$ is the unique unit vector
where $f$ attains the value~1, see Lemma~\ref{l:legendre basic}.
Therefore, for any $w\in X$ the inequality $f\cdot w\le \|w\|$ holds, with equality if and only if $w$ is proportional to $\dot\ga(t)$ with a non-negative coefficient. Substituting $w=\ga(t)-p$ we get the second inequality of the lemma and its equality condition. Substituting $w=p-\ga(t)$ we get $-M_{\ga,p}(t)\le \|p-\ga(t)\|$, with equality if and only if $p-\ga(t)$ is proportional to $\dot\ga(t)$ with a non-negative coefficient. This proves the lemma.
\end{proof}

The next lemma is a key step in the proofs
of Theorems~\ref{t:time} and~\ref{t:space}.
In the Euclidean case (see Remark~\ref{rem:euclidean case})
its statement is equivalent to 2-convexity of the squared
distance to~$p$ along the trajectory,
i.e.\ convexity of the function $t\mapsto \|\ga(t)-p\|^2-t^2$.
For an arbitrary norm the distance function to~$p$ along the trajectory
may fail to be convex,
and such examples can already be constructed
for Minkowski billiards in a half-plane.

\begin{lemma}\label{l:monotone}
Let $p\in X$. Suppose that $\ga$ reflects only from those walls $W_i$
that contain~$p$.
Then for any $t_1,t_2\in I$ distinct from reflection moments
and interval endpoints, with $t_1<t_2$, one has
\be\label{e:monotone} 
M_{\ga,p}(t_2) - M_{\ga,p}(t_1)  \ge t_2 - t_1 . 
\ee
\end{lemma}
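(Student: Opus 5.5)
Between consecutive reflections the trajectory is a straight unit-speed segment, and there the estimate holds with equality. Indeed, on such an interval $\dot\ga$ is a constant unit vector $v$, so $\LL(\dot\ga)=\LL(v)$ is a constant covector $f$. Hence
\[
M_{\ga,p}(t)=f\cdot(\ga(t)-p)
\]
is affine in $t$ with slope $f\cdot v=1$ by Lemma~\ref{l:legendre basic}. So $M_{\ga,p}(t_2)-M_{\ga,p}(t_1)=t_2-t_1$ whenever no reflection occurs in $[t_1,t_2]$.

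The remaining work is to show that at every reflection moment $\tau\in(t_1,t_2)$ the momentum does not jump down, i.e.
\[
\lim_{t\to\tau+}M_{\ga,p}(t)\ \ge\ \lim_{t\to\tau-}M_{\ga,p}(t).
\]
The general inequality then follows by summing over the reflection moments in $(t_1,t_2)$. One point needs care here. A priori there might be infinitely many reflections in $[t_1,t_2]$, and Theorem~\ref{t:time} is what we eventually want to prove, so we cannot assume otherwise. I would avoid this issue by not summing over reflections. Instead, consider the function $g(t)=M_{\ga,p}(t)-t$, defined off the reflection moments. It is locally constant on each linear piece and jumps up (weakly) at each reflection. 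By Definition~\ref{d:trajectory}, every interior $t$ has a neighbourhood with at most one break, namely at $t$ itself. So reflection moments are isolated in the interior of $I$, hence finite in the compact $[t_1,t_2]$, and summation is legitimate anyway.

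For the jump at a reflection point $q=\ga(\tau)\in\pd W_i$ with $p\in W_i$, write $v_\pm$ for the velocities after and before the reflection. The jump equals
\[
(\LL(v_+)-\LL(v_-))\cdot(q-p).
\]
By Lemma~\ref{l:reflection law}, the covector $h=\LL(v_+)-\LL(v_-)$ vanishes on $T_q\pd\Omega=T_q\pd W_i$. Hence $h=c\,\nu$ for a covector $\nu$ with $\ker\nu=T_q\pd W_i$, normalized so that $\nu$ is the outer conormal of the convex set $W_i$ at $q$, i.e. $\nu\cdot(w-q)\le 0$ for all $w\in W_i$.

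To fix the sign of $c$, use that $v_-$ points into $W_i$ and $v_+$ points out. More precisely, $\nu\cdot v_-\ge0$ (we approach from outside) and $\nu\cdot v_+\le 0$, with both strict by Remark~\ref{rem:not tangent} or Lemma~\ref{l:reflection exists}. Then
\[
h\cdot v_- = \LL(v_+)\cdot v_- - 1 \le 0
\]
by Lemma~\ref{l:legendre basic} ($\|\LL(v_+)\|^*=1$). Since $\nu\cdot v_->0$, this gives $c\le0$. Finally, $p\in W_i$ gives $\nu\cdot(q-p)\ge0$, so the jump $c\,\nu\cdot(q-p)$ is $\ge0$.

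I expect the main obstacle to be this sign bookkeeping. One must check that the trajectory really approaches the wall from the outside, so that $\nu\cdot v_->0$; this uses $\ga\subset\ov\Omega$ and the non-tangency. One must also handle a reflection point lying on several walls simultaneously. There, smoothness of $\pd\Omega$ at $q$ forces all walls containing $q$ to share the tangent hyperplane and lie on the same side. The hypothesis that $\ga$ reflects only from walls containing $p$ should be read as $p$ lying in a wall through $q$, and any such wall yields the required supporting inequality.
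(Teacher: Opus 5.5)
Your strategy is the same as the paper's. You show that $M_{\ga,p}(t)-t$ is constant on each segment, that $[t_1,t_2]$ contains finitely many reflection moments, and that the jump at a reflection point $q$ is $h\cdot(q-p)$, where $h=\LL(v_+)-\LL(v_-)$ vanishes on $T_q\pd W_i$. The sign of the jump is then controlled by the supporting hyperplane of the convex wall through $p$. Your remarks on finiteness and on several walls through $q$ are correct, and more explicit than the paper. However, the sign computation, which is the only substantive step, is wrong as written.

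\begin{itemize}
\item With your normalization $\nu\cdot(w-q)\le 0$ for $w\in W_i$, the incoming velocity points into $W_i$. So $\nu\cdot v_-<0$ and $\nu\cdot v_+>0$. You wrote the opposite inequalities, which are the ones for the inner conormal. As a test, take $W_i=\{y\le 0\}$ in the Euclidean plane, $q=0$, $\nu(x,y)=y$, and $v_-=(a,-b)$, $v_+=(a,b)$ with $b>0$. Then $\nu\cdot v_-=-b<0$ and $h=2b\,\nu$.
\item Hence $c\,\nu\cdot v_-=h\cdot v_-\le 0$ gives $c\ge 0$, not $c\le 0$.
\item Your last step is also invalid on its own terms. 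From $c\le 0$ and $\nu\cdot(q-p)\ge 0$ you get $c\,\nu\cdot(q-p)\le 0$, which is the wrong direction.
\end{itemize}
The two sign errors cancel, so your conclusion is true, but the inequalities you wrote do not prove it.

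The fix is to keep one convention throughout. We have $\nu\cdot v_-\le 0$ because $q-sv_-\in\ov\Omega$ for small $s>0$, so $q-sv_-\notin\mathrm{int}\,W_i$. Non-tangency (Remark~\ref{rem:not tangent}) makes this strict: $\nu\cdot v_-<0$. Therefore $c\ge 0$. Together with $\nu\cdot(q-p)\ge 0$, which follows from $p\in W_i$, the jump is $\ge 0$. This repaired argument is equivalent to the paper's. The paper instead shows $f\cdot v_+>0$ for the outgoing velocity, concludes that $f>0$ on the inward half-space $H_+$, and uses $q-p\in\ov H_+$.
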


\begin{proof}
The lemma is equivalent to the statement that the function $\varphi$ defined by 
\[
\varphi(t) = M_{\ga,p}(t) - t 
\]
is non-decreasing on its domain.
It suffices to verify this property on every linear segment of the trajectory
and in a neighborhood of every reflection moment.
First, consider the case when there are no reflection moments
between $t_1$ and $t_2$, i.e.\ when $\ga|_{[t_1,t_2]}$ is a straight line segment.
Let $v$ be the velocity vector of $\ga$ on $[t_1,t_2]$.
The equalities $\dot\ga(t_1)=\dot\ga(t_2)=v$
and $\ga(t_2) - \ga(t_1) = (t_2-t_1) v$
imply that
\[
M_{\ga,p}(t_2) - M_{\ga,p}(t_1) = \LL(v)\cdot \big((\ga(t_2)-p)- (\ga(t_1)-p)\big) = (t_2-t_1) \LL(v)\cdot v = t_2-t_1 , 
\]
since $\LL(v)\cdot v=1$. Hence $\varphi(t_1)=\varphi(t_2)$.
Thus $\varphi$ is constant on each linear segment of the trajectory.

Now suppose that there is exactly one reflection moment $t_0$
between $t_1$ and $t_2$.
Let $q=\ga(t_0)$ be the reflection point
and $v_-,v_+$ the velocity vectors before and after the reflection.
As shown above, $\varphi$ is constant on $[t_1,t_0)$ and $(t_0,t_2]$.
Passing to the limit in the definition of the distance momentum, we obtain
\[
\varphi(t_1) = \lim_{t\uparrow t_0} \varphi(t) = \LL(v_-)\cdot(q-p) - t_0 
\]
and 
\[
\varphi(t_2) = \lim_{t\downarrow t_0} \varphi(t) = \LL(v_+)\cdot(q-p) - t_0 , 
\]
therefore
\be\label{e:monotone1}
\varphi(t_2) - \varphi(t_1) = (\LL(v_+)-\LL(v_-))\cdot (q-p) . 
\ee
Our goal is to show that the right-hand side of \eqref{e:monotone1} is non-negative.
Define $f=\LL(v_+)-\LL(v_-)$.
Note that $f\ne 0$ since $v_+\ne v_-$. Let $H=T_q\pd\Omega$, then by Lemma~\ref{l:reflection law} we have $\ker f=H$.

Let $H_-$ and $H_+$ denote the open half-spaces bounded by~$H$
and containing tangent vectors to $X$ at $q$
pointing outwards and inwards $\Omega$, respectively.
Observe that $v_-\in H_-$ and $v_+\in H_+$ (see Remark~\ref{rem:not tangent}).

Since $\ker f=H$, the sign of $f$ is constant on each
of the half-spaces $H_-$ and $H_+$.
Moreover
\[
f\cdot v_+ = (\LL(v_+)- \LL(v_-))\cdot v_+ > 0, 
\]
because $\LL(v_+)\cdot v_+ = 1$ and $\LL(v_-)\cdot v_+ < 1$.
Hence $f\cdot x>0$ for all $x\in H_+$ and $f\cdot x<0$ for all $x\in H_-$.

Let $W_k$ be the billiard wall containing $q$.
By the assumption of the lemma we have $p\in W_k$.
Since $W_k$ is convex, the affine hyperplane $q+H$
is a supporting hyperplane for $W_k$ at~$q$.
The definition of $H_-$ and $H_+$ implies that
$W_k$ lies on the ``negative'' side of this hyperplane,
i.e.\ $W_k\subset q+\ov H_-$.
In particular, $p\in q+\ov H_-$,
hence $p-q\in \ov H_-$ and $q-p\in \ov H_+$ (see Fig.~\ref{fig:q-p}).
The latter and the positivity of $f$ on $H_+$
imply that $f\cdot(q-p) \ge 0$.
Taking into account \eqref{e:monotone1}
we obtain that $\varphi(t_2)\ge\varphi(t_1)$
and the lemma follows.
\end{proof}

\begin{figure}[h!]
\centering
\begin{tikzpicture}[
    tangent/.style={
        decoration={
            markings,
            mark=
                at position #1
                with
                {
                    \coordinate (tangent point-\pgfkeysvalueof{/pgf/decoration/mark info/sequence number}) at (0pt,0pt);
                    \coordinate (tangent unit vector-\pgfkeysvalueof{/pgf/decoration/mark info/sequence number}) at (1,0pt);
                    \coordinate (tangent orthogonal unit vector-\pgfkeysvalueof{/pgf/decoration/mark info/sequence number}) at (0pt,1);
                }
        },
        postaction=decorate
    },
    use tangent/.style={
        shift=(tangent point-#1),
        x=(tangent unit vector-#1),
        y=(tangent orthogonal unit vector-#1)
    },
    use tangent/.default=1
]
\draw[
    tangent=0.1,
    tangent=0.635
] (0,0)
    to [out=30,in=100] (10,0) node at (9.35,0.75) {$W_k$};

\fill[] (tangent point-1) circle (1pt) node[above] {$p$};
\fill[] (tangent point-2) circle (1pt) node[below] {$q$};

\draw[ use tangent=2, name path=H] (-3,0) -- (3,0) node[above left] {$q+H$};

\draw[dashed, use tangent=2, shift=(tangent point-1)] (-1.5,0) -- (1.5,0) node[pos=0.8,below] {$p+H$};

\draw[<-, >=latex, use tangent = 2] (tangent point-2) -- (1, 1) node[pos = 0.7, left ] {$v_-$};
\draw[<-, >=latex, use tangent = 2] (-1.5, 0.75) -- (tangent point-2) node[pos = 0.5, above ] {$v_+$};

\draw[dashed, <-, >=latex, use tangent=2, shift=(tangent point-1)] (-1.5, 0.75) -- (tangent point-1) node[pos = 0.5, above ] {$v_+$};

\draw[ ->, >=latex] (tangent point-1) -- (tangent point-2) node[midway, below] {$q-p$};

\end{tikzpicture}
\caption{The vectors $v_+$ and $q-p$ belong to the half-space $H_+$}
\label{fig:q-p}
\end{figure}

As a consequence we obtain an analogue of the Comparison Lemma from~\cite{BFK98}
(see also~\cite[Lemma 2.1]{BFK98}),
which states that the length of a billiard trajectory
contained in a sufficiently small neighborhood of a point~$p$
does not exceed the sum of the distances from $p$
to the endpoints of the trajectory.

\begin{cor}\label{cor:hinge comparison}
Let $p\in X$, and suppose that $\ga$ reflects only from those walls $W_i$
that contain~$p$.
Then for any $t_1,t_2\in I$ with $t_1<t_2$ one has
\be\label{e:hinge comparison}
\|p-\ga(t_1)\| + \|\ga(t_2)-p\|  \ge t_2-t_1 . 
\ee
\end{cor}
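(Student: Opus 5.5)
The plan is to combine Lemma~\ref{l:monotone} with the two bounds of Lemma~\ref{l:momentum bounds}. First assume $t_1,t_2$ are neither reflection moments nor endpoints of~$I$. Then \eqref{e:monotone} gives
\[
t_2-t_1 \le M_{\ga,p}(t_2) - M_{\ga,p}(t_1).
\]
By Lemma~\ref{l:momentum bounds}, the upper bound at $t_2$ gives $M_{\ga,p}(t_2)\le \|\ga(t_2)-p\|$. The lower bound at $t_1$ gives $M_{\ga,p}(t_1)\ge -\|p-\ga(t_1)\|$, so $-M_{\ga,p}(t_1)\le\|p-\ga(t_1)\|$. Adding these two inequalities yields \eqref{e:hinge comparison} directly. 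The order of the arguments in the two norms matters here because the norm need not be symmetric, and the lemma provides exactly the right orientations: $p-\ga(t_1)$ and $\ga(t_2)-p$.

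The remaining task is to remove the restriction on $t_1,t_2$, and this is the only point needing care. Reflection moments are isolated: by Definition~\ref{d:trajectory}, each interior moment has a punctured neighbourhood on which $\ga$ is locally straight. So the non-reflection, non-endpoint moments are dense in $I$.

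Given arbitrary $t_1<t_2$ in $I$ (possibly reflection moments or endpoints), choose sequences $s_1^{(k)}\downarrow t_1$ and $s_2^{(k)}\uparrow t_2$ of admissible moments with $s_1^{(k)}<s_2^{(k)}$. This is possible since $t_1<t_2$ and admissible moments are dense in the open interval $(t_1,t_2)$. Apply the inequality already proved to $s_1^{(k)},s_2^{(k)}$. Then pass to the limit $k\to\infty$, using continuity of $\ga$ (including at endpoints, where it is defined by continuity) and continuity of the norm. This gives \eqref{e:hinge comparison} for $t_1,t_2$.

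I do not expect a real obstacle. The one subtlety is making sure the approximating moments lie strictly inside $(t_1,t_2)$, so that Lemma~\ref{l:monotone} applies. The hypothesis on walls containing~$p$ holds for the whole trajectory and therefore for every subinterval, so it causes no difficulty.
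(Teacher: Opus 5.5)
Your proposal is correct and matches the paper's proof: for moments that are neither reflection moments nor endpoints, you combine \eqref{e:monotone} with the two bounds of Lemma~\ref{l:momentum bounds}, and you extend to arbitrary $t_1<t_2$ by continuity. Your explicit approximation argument, based on the isolatedness of reflection moments and on approximating sequences inside $(t_1,t_2)$, is a valid expansion of the paper's one-line ``by continuity'' step.
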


\begin{proof}
By continuity it suffices to prove \eqref{e:hinge comparison}
only for values of $t_1,t_2$ that are neither
reflection moments nor endpoints of~$I$.
For such $t_1,t_2$, see Lemma~\ref{l:monotone} applies
and \eqref{e:monotone} holds.
By Lemma~\ref{l:momentum bounds} we have
\[
M_{\ga,p}(t_2) \le \|\ga(t_2)-p\|
\]
and 
\[
M_{\ga,p}(t_1) \ge -\|p-\ga(t_1)\| . 
\]
These inequalities and \eqref{e:monotone} imply that
\[
t_2 - t_1 \le  M_{\ga,p}(t_2) - M_{\ga,p}(t_1) \le  \|\ga(t_2)-p\| + \|p-\ga(t_1)\| ,
\]
as claimed.
\end{proof}

\section{Proof of the Theorems} \label{sec:proofs}

We continue using the notation from the previous section.
For $p\in X$ and $r>0$ define a set $B^\pm_r(p)\subset X$ by
\be\label{e:Bpm}
 B^\pm_r(p) = \{ x\in X : \|x-p\|<r \text{ and } \|p-x\|<r \} .
\ee
In other words, $B^\pm_r(p)$ is the intersection of the balls of radius $r$ centered at $p$ for the norm $\Norm$ and the opposite norm $\Norm^-$ (see \S\ref{subsec:prelim}).

For $p\in\ov\Omega$ denote by $\Omega_p$
the complement of the union of the walls $W_i$ not containing~$p$:
\be\label{e:Omega_p}
 \Omega_p = X \setminus \bigcup_{i:p\notin W_i} W_i ,
\ee
and set
\be\label{e:r_Omega}
 r_\Omega(p) = \sup \{ r>0 : B^\pm_r(p) \subset \Omega_p\} .
\ee
Since $\Omega_p$ is an open set containing $p$, we have $r_\Omega(p)>0$.
The definitions imply that for any positive $r\le r_\Omega(p)$,
the set $B^\pm_r(p)$ intersects only those walls $W_i$ that contain $p$.

\paragraph{Proof of Theorem \ref{t:space}.}
Let $(X,\Norm,\Omega)$ be a semi-dispersing Minkowski billiard
of the form \eqref{e:walls}, $p\in\ov\Omega$ and $\ep>0$.
Define $U=B^\pm_r(p)$ where $r=\min\{\frac\ep2, r_\Omega(p)\}$,
see \eqref{e:r_Omega}.
We are going to prove that $U$ satisfies the requirements of Theorem \ref{t:space}.

Let $\ga\co I\to\ov\Omega$ be a trajectory contained in $U$. Then it is contained in $\Omega_p$, hence all reflection points belong to the walls containing $p$. By Corollary \ref{cor:hinge comparison}, for any $t_1,t_2\in I$ with $t_1<t_2$ we have
$$
 t_2-t_1 \le \|p-\ga(t_1)\| + \|\ga(t_2)-p\| < 2r \le \ep ,
$$
where the second inequality in the chain follows
from the definition of $B^\pm_r(p)$, see \eqref{e:Bpm}.
Thus, the length of the interval $I$ is no greater than~$\ep$.
\qed

\paragraph{Proof of Theorem \ref{t:time}.}
Let $(X,\Norm,\Omega)$ be a semi-dispersing Minkowski billiard
of the form \eqref{e:walls},
and let $\ga\co I\to \ov\Omega$ be a trajectory
where $I\subset\R$ is a bounded interval.
We may assume that $I$ is a closed interval of the form $I=[a,b]$.
Indeed, if $I$ is open or half-open, then by Lipschitz
continuity $\ga$ extends continuously to the closure of this interval.

Suppose that $\ga$ has infinitely many reflection moments
and let $t_0$ be an accumulation point of these moments.
The definition of a trajectory implies that $t_0=a$ or $t_0=b$.
The case $t_0=a$ reduces to the case $t_0=b$ by
reversing the time and replacing the norm by its opposite.
Thus we may assume that $b$ is an accumulation point of reflection moments.

Let $p=\ga(b)$ and choose $c\in[a,b)$ sufficiently close to $b$
so that $\ga([c,b])\subset\Omega_p$, see \eqref{e:Omega_p}.
We will henceforth consider only the part $\ga|_{[c,b]}$ of the trajectory.
On this part all reflection points belong to walls containing $p$,
so Corollary \ref{cor:hinge comparison} applies.
Substituting $t_1=c$ and $t_2=b$ into \eqref{e:hinge comparison}
and taking into account that $\ga(b)-p=0$, we obtain
\be\label{e:main1}
   \|p-\ga(c)\| \ge b - c .
\ee
On the other hand, by the subadditivity of the norm,
for all $t\in[c,b]$ we have the triangle inequality
\be\label{e:main2}
 \|\ga(t)-\ga(c)\| + \|p-\ga(t)\| \ge \|p-\ga(c)\| \ge b-c ,
\ee
where the second inequality follows from \eqref{e:main1}. Since the trajectory is parametrized with unit speed, the length of each of the segments $\ga|_{[c,t]}$ and $\ga|_{[t,b]}$ is no less than the distance between its endpoints, that is,
\[
 t - c \ge \|\ga(t)-\ga(c)\|
\]
and
\[
 b - t \ge \|p-\ga(t)\| .
\]  
Summing these two inequalities
we obtain that the triangle inequality in \eqref{e:main2} turns into equality.
By the strict convexity of the norm this
is possible only if the point $\ga(t)$ lies
on the straight line segment between $\ga(c)$ and $p$.
Therefore $\ga|_{[c,b]}$ parametrizes a straight line segment,
and this means that there are no reflections on the time interval $[c,b]$.
This contradicts the assumption that $b$ is an accumulation
point of reflection moments.
This finishes the proof of Theorem~\ref{t:time}.
\qed


\begin{bibdiv}
\begin{biblist}

\bib{AKP19}{book}{
   author={Alexander, Stephanie},
   author={Kapovitch, Vitali},
   author={Petrunin, Anton},
   title={An invitation to Alexandrov geometry: CAT(0) spaces},
   series={SpringerBriefs in Mathematics},
   publisher={Springer, Cham},
   date={2019},
   pages={xii+88},
   isbn={978-3-030-05311-6},
   isbn={978-3-030-05312-3},
   doi={10.1007/978-3-030-05312-3},
}

\bib{BFK98}{article}{
   author={Burago, D.},
   author={Ferleger, S.},
   author={Kononenko, A.},
   title={Uniform estimates on the number of collisions in semi-dispersing
   billiards},
   journal={Ann. of Math. (2)},
   volume={147},
   date={1998},
   number={3},
   pages={695--708},
   issn={0003-486X},
   doi={10.2307/120962},
}

\bib{BFK98b}{article}{
   author={Burago, D.},
   author={Ferleger, S.},
   author={Kononenko, A.},
   title={A geometric approach to semi-dispersing billiards},
   conference={
      title={Hard ball systems and the Lorentz gas},
   },
   book={
      series={Encyclopaedia Math. Sci.},
      volume={101},
      publisher={Springer, Berlin},
   },
   isbn={3-540-67620-1},
   date={2000},
   pages={9--27},
   doi={10.1007/978-3-662-04062-1\_2},
}

\bib{BI13}{article}{
   author={Burago, Dmitri},
   author={Ivanov, Sergei},
   title={Polyhedral Finsler spaces with locally unique geodesics},
   journal={Adv. Math.},
   volume={247},
   date={2013},
   pages={343--355},
   issn={0001-8708},
   doi={10.1016/j.aim.2013.07.007},
}


\bib{Bus48}{article}{
   author={Busemann, Herbert},
   title={Spaces with non-positive curvature},
   journal={Acta Math.},
   volume={80},
   date={1948},
   pages={259--310},
   issn={0001-5962},
   doi={10.1007/BF02393651},
}


\bib{Gal81}{article}{
   author={Galperin, G. A.},
   title={Systems of locally interacting and repelling particles that are
   moving in space},
   language={Russian},
   journal={Trudy Moskov. Mat. Obshch.},
   volume={43},
   date={1981},
   pages={142--196},
   issn={0134-8663},
}

\bib{GluMat25}{article}{
   author={Glutsyuk, A. A.},
   author={Matveev, V. S.},
   title={If a Minkowski billiard is projective, then it is the standard billiard},
   journal={Sbornik: Mathematics},
   year={2025},
   volume={216},
   number={5},
   pages={638--653},
   doi={10.4213/sm10182e},
}


\bib{GT02}{article}{
   author={Gutkin, Eugene},
   author={Tabachnikov, Serge},
   title={Billiards in Finsler and Minkowski geometries},
   journal={J. Geom. Phys.},
   volume={40},
   date={2002},
   number={3-4},
   pages={277--301},
   issn={0393-0440},
   doi={10.1016/S0393-0440(01)00039-0},
}

\bib{Halpern77}{article}{
   author={Halpern, Benjamin},
   title={Strange billiard tables},
   journal={Trans. Amer. Math. Soc.},
   volume={232},
   date={1977},
   pages={297--305},
   issn={0002-9947},
   doi={10.2307/1998942},
}

%

\bib{IL19}{article}{
   author={Ivanov, Sergei},
   author={Lytchak, Alexander},
   title={Rigidity of Busemann convex Finsler metrics},
   journal={Comment. Math. Helv.},
   volume={94},
   date={2019},
   number={4},
   pages={855--868},
   issn={0010-2571},
   doi={10.4171/cmh/476},
}


\bib{Sinai78}{article}{
   author={Sinai, Ja. G.},
   title={Billiard trajectories in a polyhedral angle},
   language={Russian},
   journal={Uspehi Mat. Nauk},
   volume={33},
   date={1978},
   number={1(199)},
   pages={229--230},
   issn={0042-1316},
   translation={
      journal={Russian Mathematical Surveys},
      volume={33},
      date={1978},
      number={1},
      pages={219--220},
      doi={10.1070/RM1978v033n01ABEH002254},
   },
}

\bib{ProjTab}{article}{
   author={Tabachnikov, Serge},
   title={Introducing projective billiards},
   journal={Ergodic Theory Dynam. Systems},
   volume={17},
   date={1997},
   number={4},
   pages={957--976},
   issn={0143-3857},
   doi={10.1017/S0143385797086239},
}

\bib{Vas79}{article}{
   author={Vaserstein, L. N.},
   title={On systems of particles with finite-range and/or repulsive
   interactions},
   journal={Comm. Math. Phys.},
   volume={69},
   date={1979},
   number={1},
   pages={31--56},
   issn={0010-3616},
}

\end{biblist}
\end{bibdiv}
\end{document}